\documentclass[12pt,reqno]{amsart}

\usepackage{amsthm, mathrsfs,amssymb,amsmath}
\usepackage{enumerate}
\usepackage[hidelinks]{hyperref}
\usepackage{xcolor}
\hypersetup{
	colorlinks,
	linkcolor={red!50!black},
	citecolor={green!50!black},
	urlcolor={red!80!black}
}
\makeatletter
\@namedef{subjclassname@2010}{%
	\textup{2010} Mathematics Subject Classification}
\makeatother

\frenchspacing

\setlength{\textheight}{23cm}
\setlength{\textwidth}{16cm}
\setlength{\oddsidemargin}{0cm}
\setlength{\evensidemargin}{0cm}
\setlength{\topmargin}{0cm}


\allowdisplaybreaks

\newtheorem{thm}{Theorem}[section]
\newtheorem{lem}[thm]{Lemma}

\newtheorem{cor}[thm]{Corollary}

\theoremstyle{definition}

\theoremstyle{remark}

\title[$K$-functional and Modulus of smoothness on Damek-Ricci spaces]{A note on  $K$-functional, Modulus of smoothness, Jackson theorem and Nikolskii-Stechkin inequality on Damek-Ricci spaces }

\author{Vishvesh Kumar}
\address{Vishvesh Kumar \endgraf
	Department of Mathematics: Analysis, Logic and Discrete Mathematics
	\endgraf
	Ghent University, Belgium}
\endgraf
\email{vishveshmishra@gmail.com}
\author[Michael Ruzhansky]{Michael Ruzhansky}
\address{
	Michael Ruzhansky
	\endgraf
	Department of Mathematics: Analysis, Logic and Discrete Mathematics
	\endgraf
	Ghent University, Belgium
	\endgraf
	and
	\endgraf
	School of Mathematics
	\endgraf
	Queen Mary University of London
	\endgraf
	United Kingdom
	\endgraf
	{\it E-mail address} {\rm michael.ruzhansky@ugent.be}
}

\begin{document}
	
	\begin{abstract} 
	 In this paper we study approximation theorems for $L^2$-space on Damek-Ricci spaces. We prove direct Jackson theorem of approximations for the modulus of smoothness defined using spherical mean operator on Damek-Ricci spaces. We also prove Nikolskii-Stechkin inequality. To prove these inequalities we use functions of bounded spectrum as a tool of approximation. Finally, as an application we prove equivalence of the $K$-functional and modulus of smoothness for Damek-Ricci spaces.   
	
	\end{abstract}
	\keywords{K-functional, Damek-Ricci spaces,  Harmonic $NA$ groups, Fourier transform, Spherical mean operator, Modulus of smoothness, Direct Jackson theorem, Jacobi transform}
	\subjclass[2010]{Primary 22E30, 41A17 Secondary 41A10}
	\maketitle
	\tableofcontents 
	
	\section{Introduction}  \label{Sec1} 
	The main purpose of this paper is to study the equivalence of the $K$-functional and the modulus of smoothness generated by the spherical mean operator on Damek-Ricci spaces. Damek-Ricci spaces, also known as Harmonic NA groups, are solvable (non-unimodular) Lie groups. It is worth mentioning that Damek-Ricci spaces contain non-compact symmetric spaces of rank one as a very small subclass and, in general, Damek-Ricci spaces are not symmetric. Damek-Ricci spaces were introduced by Eva Damek and Fulvo Ricci in \cite{DamekRicci} and the geometry of these spaces was  studied by Damek \cite{Damek} and Cowling-Dooley-Koranyi \cite{CDK91}. Fourier analysis on these spaces has been developed and studied by many authors including Anker-Damek-Yacoub \cite{Anker96}, Astengo-Comporesi-Di Blasio \cite{ACB97} , Damek-Ricci \cite{DamekRicci1}, Di Blasio \cite{Di-Blasio}, Ray-Sarkar \cite{RS}, Kumar-Ray-Sarkar \cite{KRS}. One of the interesting features of these spaces is that the radial analysis on these spaces behaves similar to the hyperbolic spaces as observed in \cite{Anker96} and therefore it fits into the perfect setting of Jacobi analysis developed by Flensted-Jensen and Koornwinder \cite{Koorn,FK,FK2}. 
	
	The study of the $K$-functional is a classical and important topic in interpolation theory and approximation theory. Peetre the $K$-functional is useful for describing the interpolation spaces between two Banach spaces.  First, let us recall the definition of the $K$-functional. For two Banach spaces $A_1$ and $A_2$, the Peetre the $K$-functional is given by
	$$K(f, \delta, A_1, A_2):= \inf \{\|f_1\|_{A_1}+\delta \|f_2\|_{A_2}:\,\,f=f_1+f_2, \, f_1 \in A_1, f_2 \in A_2  \},$$ where $\delta$ is a positive parameter. 
	Now, the Peetre interpolation space $(A_1, A_2)_{\theta, r}$ for $0<\theta<1,\, 0<r \leq \infty,$ is defined by the norm 
	$$|f|_{(A_1, A_2)_{\theta, r}}:= \begin{cases} \left( \int_0^\infty  [\delta^{-\theta} K(f, \delta, A_1, A_2)]^{r}  \frac{d \delta}{\delta} \right)^{\frac{1}{r}} & \quad \text{if} \,\, 0<r<\infty, \\ \sup_{\delta>0} \delta^{-\theta} K(f, \delta, A_1, A_2) & \quad \text{if}\,\, r=\infty.
	\end{cases}$$
	
	The characterizations of the $K$-functional has several applications in approximation theory \cite{Dit1}.  In \cite{Peetre}, Peetre started characterization of the $K$-functional by proving an   equivalence of it with the modulus of smoothness for $L^p$-spaces on $\mathbb{R}^n$ which proved to be very helpful to study apporximation theory.  Later, in \cite{Vore2} the authors showed its equivalence in terms of the rearrangement of derivatives for a pair of Sobolev spaces $W_p^m$ and for the pair $(L^p, W_p^m).$ In particular, a characterization of the $K$-functional for $(L^2(\mathbb{R}), W^m_2(\mathbb{R}))$ can be found in the classical book of Berens and Buter \cite{But}.
	 The characterizations of the $K$-functional for  the pair $(L^2(X), W^m_2(X))$ were explored by several authors for different choices of $X.$ Classically, this equivalence was proved for $X=\mathbb{R}^n$ by Peetre \cite{Peetre} and after that it was proved for $X=[a,b]$ by De Vore-Scherer \cite{Vore2}, for weighted setting by Ditzian \cite{Dit}, for $X=\mathbb{R}^n$ with Dunkl translation by Belkina and Platonov \cite{BP}, for rank one symmetric spaces \cite{Ouadih}, for Jacobi analysis in  \cite{HD20} and for compact symmetric spaces on \cite{Platov7}. In this paper our aim is to extend this characterization to more general setting of solvable (non unimodular) Lie groups. We consider the pair $(L^2(X), W^m_2(X))$ for $X$ being the Damek-Ricci spaces. We will prove the equivalence of the $K$-functional and modulus of smoothness generated by spherical mean operator on Damek-Ricci space. Modulus of smoothness for Damek-Ricci space has been introduced in \cite{KRS}. We prove our main result by establishing two classical results, namely, Direct Jackson theorem \cite{Niko1} and Nikolskii-Stechkin inequality \cite{Niko} for Damek-Ricci spaces. Platonov studied Direct Jackson theorem and Nikolskii-Stechkin inequality for compact homogeneous manifolds and for noncompact symmetric spaces of rank one (\cite{Platov3, Platov4,Platov5, Platov7,Platov8, Platov9}).

		\section{ Essentials about harmonic $NA$ groups} \label{Ess}
	
	For basics of harmonic $NA$ groups and Fourier analysis on them, one can refer to seminal research papers \cite{Damek,DamekRicci,DamekRicci1, Di-Blasio,Anker96,ACB97,CDK91, RS, KRS, Kaplan}. However, we give necessary definitions, notation and  terminology that we shall use in this paper.
	
	 Let $\mathfrak{n}$ be a two-step nilpotent Lie algebra, equipped with an inner product $\langle \,,\, \rangle$ . Denote by $\mathfrak{z}$ the center of $\mathfrak{n}$ and by $\mathfrak{v}$ the orthogonal complement of $\mathfrak{z}$ in $\mathfrak{n}$ with respect to the inner product of $\mathfrak{n}.$  
	We assume that dimensions of $\mathfrak{v}$ and $\mathfrak{z}$ are $m$ and $l$ respectively as real vector spaces. The Lie algebra $\mathfrak{n}$ is $H$-type algebra if for every $Z \in \mathfrak{z},$ the map $J_Z:\mathfrak{v} \rightarrow \mathfrak{v}$  defined by 
	$$\langle J_Z X, Y\rangle = \langle Z, [X, Y] \rangle,\,\,\,\,\,\,X,Y \in \mathfrak{v},\, Z\in \mathfrak{z},$$ satisfies the condition $J_Z^2=-\|Z\|^2I_{\mathfrak{v}},$ where $I_{\mathfrak{v}}$ is the identity operator on $\mathfrak{v}.$ It follows that for $Z \in \mathfrak{z}$ with $\|Z\|=1$ one has $J_Z^2=-I_{\mathfrak{v}}$; that is, $J_Z$ induced a complex structure on $\mathfrak{v}$ and hence $m=\dim(\mathfrak{v})$ is always even.  A connected and simply connected Lie group $N$ is called $H$-type if its Lie algebra is of $H$-type. The exponential map is a diffeomorphism as $N$ is nilpotent, we can parametrize the element of $N=\exp{\mathfrak{n}}$ by $(X, Z)$, for $X \in \mathfrak{v}$  and $Z \in \mathfrak{z}.$ The multiplication on $N$ follows from the Campbell-Baker-Hausdorff formula given by 
	$$(X, Z) (Z',Z')= (X+X', Z+Z'+\frac{1}{2}[X,X']).$$
	The group $A= \mathbb{R}_+^*$ acts on $N$ by nonisotropic dilations as follows: $(X, Y) \mapsto (a^{\frac{1}{2}}X, aZ).$ Let $S=N \ltimes A$ be the semidirect product of $N$ with $A$ under the aforementioned action. The group multiplication on $S$ is defined by 
	$$(X,Z,a)(X',Z',a')= (X+a^{\frac{1}{2}}X', Z+aZ'+\frac{1}{2}a^{\frac{1}{2}} [X, X'], aa').$$
	Then $S$ is a solvable (connected and simply connected) Lie group with Lie algebra $\mathfrak{s}=\mathfrak{z}\oplus \mathfrak{v} \oplus \mathbb{R}$ and Lie bracket 
	$$[(X, Z, \ell), (X', Z', \ell')]= (\frac{1}{2} \ell X'-\frac{1}{2} \ell'X, \ell Z'-\ell'Z+[X, X]', 0).$$
	The group $S$ is equipped with the left-invariant Riemannian metric induced by 
	$$ \langle (X, Z, \ell), (X', Z', \ell')\rangle= \langle X, X'\rangle+\langle Z, Z'\rangle+\ell \ell'$$ on $\mathfrak{s}.$ The homogneous dimension of $N$ is equal to $\frac{m}{2}+l$ and will be denoted by $Q.$ At times, we also use symbol $\rho$ for $\frac{Q}{2}.$ Hence $\dim(\mathfrak{s})=m+l+1,$ denoted by $d.$ The associated left Haar measure on $S$ is given by $a^{-Q-1} dX dZ da,$ where $dX,\, dZ$ and $da$ are the Lebesgue measures on $\mathfrak{v}, \mathfrak{z}$ and $\mathbb{R}_+^*$ respectively. The element of $A$ will be identified with $a_t=e^t,$ $t \in \mathbb{R}.$ The group $S$ can be realized as the unit ball $B(\mathfrak{s})$ in $\mathfrak{s}$ using the Cayley transform $C: S \rightarrow  B(\mathfrak{s})$ (see \cite{Anker96}).
	
	To define (Helgason) Fourier transform on $S$ we need to introduce the notion of Poisson kernel \cite{ACB97}. The Poisson Kernel $\mathcal{P}:S \times N \rightarrow \mathbb{R}$ is defined by $\mathcal{P}(na_t, n')= P_{a_t}(n'^{-1}n),$ where $$ P_{a_t}(n)= P_{a_t}(X, Z)=C a_t^Q \left( \left(a_t+\frac{|X|^2}{4} \right)^2+|Z|^2 \right)^{-Q},\,\,\,\, n=(X, Z) \in N.$$
	The value of $C$ is suitably adjusted so that $\int_N P_a(n) dn=1$ and $P_1(n) \leq 1.$ The Poisson kernel satisfies several useful properties (see \cite{KRS,RS,ACB97}), we list here a few of them. For $\lambda \in \mathbb{C},$ the complex power of the Poisson kernel is defined as 
	$$\mathcal{P}_\lambda(x, n)= \mathcal{P}(x, n)^{\frac{1}{2}-\frac{i \lambda}{Q}}.$$ It is known  (\cite{RS, ACB97}) that for each fixed $x \in S,$ $\mathcal{P}_\lambda(x, \cdot) \in L^p(N)$ for $1 \leq p \leq \infty$ if $\lambda = i \gamma_p \rho,$ where $\gamma_p= \frac{2}{p}-1.$
	A very special feature of $\mathcal{P}_\lambda(x,n)$ is that it is constant on the hypersurfaces $H_{n, a_t}=\{n \sigma(a_t n'): \, n' \in N\}.$ Here $\sigma$ is the geodesic inversion on $S,$ that is an involutive, measure-preserving, diffeomorphism which can be explicitly given by \cite{CDK91}:
	\begin{align*}
	    \sigma(X,Z, a_t) = \left( \left(e^t+\frac{|V|^2}{4} \right)^2+|Z|^2 \right)^{-1} \left( \left(- \left( e^t+\frac{|X|^2}{4} \right)+J_Z \right)X, -Z, a_t  \right).
	\end{align*}

	Let $\Delta_S$ be the Laplace-Beltrami operator on $S.$ Then for every fixed $n \in N,$ $\mathcal{P}_\lambda(x, n)$ is an eigenfunction of $\Delta_S$ with eigenvalue $-(\lambda^2+\frac{Q^2}{4})$ (see \cite{ACB97}).
	For a measurable function $f$ on $S,$ the (Helgason) Fourier transform is defined as
	$$\widetilde{f}(\lambda, n)= \int_S f(x)\, \mathcal{P}_\lambda(x, n) dx$$ whenever the integral converge. For $f \in C_c^\infty(S),$ the following inversion formula holds (\cite[Theorem 4.4]{ACB97}):
	$$f(x)= C \int_{\mathbb{R}} \int_N \widetilde{f}(\lambda, n)\,\mathcal{P}_{-\lambda}(\lambda, n) |c(\lambda)|^{-2} \, d\lambda dn,$$ where $C=\frac{c_{m,l}}{2 \pi}$. The authors also proved that the (Helgason) Fourier transform extends to an isometry from $L^2(S)$ onto the space $L^2(\mathbb{R}_+ \times N, C|c(\lambda)|^{-2}d\lambda dn).$ In fact they have the precise value of constants, we refer the reader to \cite{ACB97}. The following estimates for the function $|c(\lambda)|$ holds:
	$c_1 |\lambda|^{d-1} \leq |c(\lambda)|^{-2} \leq (1+|\lambda|)^{d-1} $ for all $\lambda \in \mathbb{R}$ (e. g. see \cite{RS}).
	In \cite[Theorem 4.6]{RS}, the authors proved the following version of the Hausdorff-Young inequality:
	For $1 \leq p \leq 2$ we have 
	\begin{align}
	    \left(\int_{\mathbb{R}} \int_{N} |\widetilde{f}(\lambda+i \gamma_{p'} \rho, n)|^{p'} dn\, |c(\lambda)|^{-2} d\lambda \right)^{\frac{1}{p'}} \leq 
	C_p \|f\|_p.
	\end{align}
	  A function $f$ on $S$ is called {\it radial} if for all $x, y \in S ,$ $f(x)=f(y)$ if $\mu(x,e)=\mu(y,e),$ where $\mu$ is the metric induced by the canonical  left invariant Riemannian structure on $S$ and  $e$ is the identity element of $S.$  Note that radial functions on $S$ can be identified with the functions $f=f(r)$ of the geodesic distance  $r=\mu(x, e) \in [0, \infty)$ to the identity.  It is clear that $\mu(a_t, e)=|t|$ for $t \in \mathbb{R}.$ At times, for any radial function $f$ we use the notation $f(a_t)=f(t).$ For any function space $\mathcal{F}(S)$ on $S$, the subspace of radial functions will be denoted by $\mathcal{F}(S)^\#.$ 
	The elementary spherical function $\phi_\lambda(x)$ is defined by 
	 $$\phi_\lambda(x) :=\int_N \mathcal{P}_\lambda(x, n) \mathcal{P}_{-\lambda}(x, n)\, dn.  $$
	 It follows (\cite{Anker96, ACB97}) that $\phi_\lambda$ is a radial eigenfunction of the Laplace-Beltrami operator $\Delta_S$ of $S$ with eigenvalue $-(\lambda^2+\frac{Q^2}{4})$ such that $\phi_\lambda(x)=\phi_{-\lambda}(x),\,\, \phi_\lambda(x)=\phi_\lambda(x^{-1})$ and $\phi_\lambda(e)=1.$ It is also evident from the fact that, for every fixed $n \in N,$ $\mathcal{P}_\lambda(x, n)$ is an eigenfunction of $\Delta_S$ with eigenvalue $-(\lambda^2+\frac{Q^2}{4})$, that, for suitable function $f$ on $S,$ we have $$\widetilde{\Delta_S^l f}(\lambda, n)= -(\lambda^2+ \frac{Q^2}{4})^l \widetilde{f}(\lambda, n)$$ for every natural number $l$ (see \cite[p. 416]{ACB97}).
	 In \cite{Anker96}, the authors showed that the radial part (in geodesic polar coordinates) of the Laplace-Beltrami operator $\Delta_S$ given by 
	 $$\textnormal{rad}\, \Delta_S= \frac{\partial^2}{\partial t}+\{ \frac{m+l}{2} \coth{ \frac{t}{2}}+\frac{k}{2} \tanh{\frac{t}{2}} \} \frac{\partial}{\partial t},$$
	 is (by substituting $r=\frac{t}{2}$) equal to  $\frac{1}{4} \mathcal{L}_{\alpha, \beta}$  with indices $\alpha=\frac{m+l+1}{2}$ and $\beta=\frac{l-1}{2},$ where $\mathcal{L}_{\alpha, \beta}$ is the Jacobi operator studied by Koornwinder \cite{Koorn} in detail. It is worth noting that we are in the ideal situation of Jacobi analysis with $\alpha>\beta>\frac{-1}{2}.$ In fact, the Jacobi functions $\phi_\lambda^{\alpha, \beta}$ and elementary spherical functions $\phi_\lambda$ are related as  (\cite{Anker96}): $\phi_\lambda(t)=\phi_{2 \lambda}^{\alpha, \beta}(\frac{t}{2}).$ 
	As consequence of this relation, the following estimates for the elementary spherical functions hold true (see \cite{Platonov}).
	\begin{lem} \label{estijac}  The following inequalities are valid for spherical functions $\phi_\lambda(t)\,\,(t, \lambda \in \mathbb{R}_+):$
	\begin{itemize}
	    \item $|\phi_\lambda(t)| \leq 1.$
	    \item $|1-\phi_\lambda(t)| \leq \frac{t^2}{2} ( \lambda^2+\frac{Q^2}{4}).$
	    \item There exists a constant $c>0,$ depending only on $\lambda,$ such that $|1-\phi_\lambda(t)| \geq c$ for  $\lambda t \geq 1.$ 
	\end{itemize}
	\end{lem}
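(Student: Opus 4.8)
The plan is to reduce all three estimates to the one–dimensional radial picture, i.e.\ to the Jacobi function $\phi^{\alpha,\beta}_{2\lambda}$ via the identity $\phi_\lambda(t)=\phi^{\alpha,\beta}_{2\lambda}(t/2)$ recorded above, and to exploit that $\phi_\lambda$ is the radial eigenfunction of $\Delta_S$ normalized by $\phi_\lambda(0)=1$ and $\phi_\lambda'(0)=0$. I would write the radial part in divergence form as $\operatorname{rad}\Delta_S=A(t)^{-1}\frac{d}{dt}\bigl(A(t)\frac{d}{dt}\bigr)$, where $A$ is the positive, smooth, increasing density on $(0,\infty)$ whose logarithmic derivative $A'/A$ is the first–order coefficient $\frac{m+l}{2}\coth\frac t2+\frac k2\tanh\frac t2$ appearing in $\operatorname{rad}\Delta_S$, and $A(t)\sim c\,t^{\,m+l}$ as $t\to0^+$; set $\mu^2:=\lambda^2+\frac{Q^2}{4}$, so that the eigenvalue equation reads $(A\phi_\lambda')'=-\mu^2A\phi_\lambda$. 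The first estimate $|\phi_\lambda|\le1$ is the only genuinely ``soft'' input I would take from Jacobi analysis: for real spectral parameter $\phi_\lambda$ is positive definite with $\phi_\lambda(e)=1$ (equivalently, since $\alpha\ge\beta\ge-\tfrac12$, one has Koornwinder's maximum bound $|\phi^{\alpha,\beta}_{2\lambda}(r)|\le\phi^{\alpha,\beta}_0(r)\le1$), and a normalized positive definite function is bounded by its value at the identity.

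For the second estimate I would integrate the eigenvalue equation. Since $A(u)\phi_\lambda'(u)\to0$ as $u\to0^+$, integrating $(A\phi_\lambda')'=-\mu^2A\phi_\lambda$ from $0$ to $s$ gives $A(s)\phi_\lambda'(s)=-\mu^2\int_0^sA(u)\phi_\lambda(u)\,du$, and integrating once more (using $\phi_\lambda(0)=1$) yields the identity
\begin{equation*}
1-\phi_\lambda(t)=\mu^2\int_0^t\frac{1}{A(s)}\int_0^sA(u)\phi_\lambda(u)\,du\,ds .
\end{equation*}
Invoking the first estimate $|\phi_\lambda|\le1$ and the monotonicity of $A$ (which gives $\int_0^sA(u)\,du\le sA(s)$) we obtain
\begin{equation*}
|1-\phi_\lambda(t)|\le\mu^2\int_0^t\frac{1}{A(s)}\int_0^sA(u)\,du\,ds\le\mu^2\int_0^t s\,ds=\frac{t^2}{2}\Bigl(\lambda^2+\tfrac{Q^2}{4}\Bigr),
\end{equation*}
which is exactly the claimed bound.

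For the third estimate I would compare $\phi_\lambda$ with the Harish--Chandra function $\phi_0$. Two qualitative facts about $\phi_0$ are needed: the domination $|\phi_\lambda(t)|\le\phi_0(t)$ for all real $\lambda$ and all $t\ge0$, and the strict monotonicity of $\phi_0$. The latter I would read off the same integral identity at $\lambda=0$: since $\phi_0$ is the (positive) ground spherical function, $\phi_0>0$ on $[0,\infty)$, whence $\phi_0'(s)=-\rho^2A(s)^{-1}\int_0^sA(u)\phi_0(u)\,du<0$ for $s>0$, so $\phi_0$ is strictly decreasing from $\phi_0(0)=1$. Consequently, for $\lambda t\ge1$, i.e.\ $t\ge1/\lambda$,
\begin{equation*}
1-\phi_\lambda(t)\ge1-\phi_0(t)\ge1-\phi_0(1/\lambda)=:c>0,
\end{equation*}
a constant depending only on $\lambda$, as required.

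The routine parts are the two integrations and the monotonicity estimate for $A$; the real content sits in the qualitative spectral facts used as input---the boundedness/positive definiteness of $\phi_\lambda$ for real $\lambda$ and its domination by the strictly decreasing ground function $\phi_0$. For a general (non-symmetric) Damek--Ricci space these are not elementary, and I expect securing them---rather than the estimates that follow---to be the main obstacle; the cleanest route is to transfer them from Koornwinder's and Platonov's Jacobi-analysis results through the relation $\phi_\lambda(t)=\phi^{\alpha,\beta}_{2\lambda}(t/2)$.
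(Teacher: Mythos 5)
The paper does not actually prove this lemma: it is imported wholesale from Platonov's work via the identification $\phi_\lambda(t)=\phi^{\alpha,\beta}_{2\lambda}(t/2)$ with Jacobi functions, so any genuine argument is already ``different from the paper''. Your proofs of the first two items are correct and self-contained modulo the standard Jacobi-analysis input $|\phi^{\alpha,\beta}_{2\lambda}|\le\phi^{\alpha,\beta}_0\le 1$: writing $\operatorname{rad}\Delta_S$ in divergence form with the increasing density $A$, the double integration of $(A\phi_\lambda')'=-\mu^2A\phi_\lambda$ together with $\int_0^sA(u)\,du\le sA(s)$ gives exactly $|1-\phi_\lambda(t)|\le\frac{t^2}{2}(\lambda^2+\frac{Q^2}{4})$; the vanishing of $A\phi_\lambda'$ at the origin is justified since $\phi_\lambda$ is smooth with $\phi_\lambda'(0)=0$ and $A(u)\sim cu^{m+l}$.

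The weak point is the third item. Your bound $1-\phi_\lambda(t)\ge 1-\phi_0(1/\lambda)$ does prove the statement as literally written (a constant ``depending only on $\lambda$''), but this constant degenerates: $\phi_0(1/\lambda)\to\phi_0(0)=1$ as $\lambda\to\infty$, so $c=c(\lambda)\to 0$. In the paper the lemma is invoked in the proof of Theorem \ref{vishthm3.2} with $t=1/\nu$ fixed and $\lambda$ ranging over all of $[\nu,\infty)$, and the factor $c^{-2k}$ is pulled outside the $\lambda$-integral; that step requires $c$ to be uniform in $\lambda$ on the region $\lambda t\ge 1$, which your argument does not deliver. The uniform version is what Lemma \ref{Lembes} combined with the preceding Bessel-function lemma provides: for $0<t\le t_0$ one has $|1-\phi_\lambda(t)|\ge c_1|1-j_\alpha(\lambda t)|\ge c_1c_{1,\alpha}\min\{1,(\lambda t)^2\}=c_1c_{1,\alpha}$ whenever $\lambda t\ge 1$, with constants independent of $\lambda$ (at the price of restricting $t$ to a bounded interval, which suffices for the applications, where $t=1/\nu$ or $t=\delta$ is small). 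You should either route item three through that comparison with $j_\alpha$, or note explicitly that your constant is not uniform and cannot be used where the paper uses it.
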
 
	   
	   Let $\sigma_t$ be the normalized surface measure of the geodesic sphere of radius $t$. Then $\sigma_t$ is a nonnegative radial measure. The spherical mean operator $M_t$ on a suitable function space on $S$ is defined by $M_tf: =f*\sigma_t.$  It can be noted that $M_tf(x)=\mathcal{R}(f^x)(t)$, where $f^x$ denotes  the right translation of function $f$ by $x$ and $\mathcal{R}$ is the radialization operator defined, for suitable function $f,$ by
	   $$\mathcal{R}f(x)=\int_{S_\nu} f(y)\,d\sigma_{\nu}(y),$$ where $\nu=r(x)= \mu(C(x), 0),$ here $C$ is the Cayley transform, and $d\sigma_\nu$ is the normalized surface measure induced by the left invariant Riemannian metric on the geodesic sphere $S_\nu=\{y \in S: \mu(y, e)=\nu \}.$ It is easy to see that  $\mathcal{R}f$ is a radial function and for any radial function $f,$ $\mathcal{R}f=f.$ Consequently, for a radial function $f,$\, $M_tf$ is the usual translation of $f$ by $t.$  In \cite{KRS}, the authors proved that, for a suitable function $f$ on $S,$ $\widetilde{M_t f}(\lambda, n)= \widetilde{f}(\lambda, n) \phi_\lambda(t)$ whenever both make sense. Also, $M_t f$ converges to $f$ as $t \rightarrow 0,$ i.e., $\mu(a_t, e) \rightarrow 0.$ It is also known that $M_t$ is a bounded operator on $L^2(S)$ with operator norm equal to $\phi_0(a_t).$ In particular, for $f \in L^2(S),$ we have   $\|M_t f\|_{2} \leq \phi_0(a_t) \|f\|_2.$
	  The following Lemmata are taken from \cite{BrayPinsky}.
	  \begin{lem} Let $\alpha>\frac{-1}{2}.$ Then there are positive constant $c_{1, \alpha}$ and $c_{2, \alpha}$ such that 
	  $$ c_{1, \alpha} \min \{1, (\lambda t)^2\} \leq 1- j_\alpha(\lambda t) \leq c_{2, \alpha} \min \{1, (\lambda t)^2 \},$$ where $j_\alpha$ is the usual Bessel function of first kind normalized by $j_\alpha(0)=1.$
	  \end{lem}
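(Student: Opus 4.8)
The plan is to reduce the whole statement to the classical Poisson integral representation of the Bessel function and then treat the two regimes $\lambda t \le 1$ and $\lambda t \ge 1$ separately. Write $s = \lambda t \ge 0$. For $\alpha > -\tfrac12$ the normalized Bessel function $j_\alpha(s) = 2^\alpha \Gamma(\alpha+1) s^{-\alpha} J_\alpha(s)$ admits the representation
$$ j_\alpha(s) = c_\alpha \int_{-1}^{1} (1-u^2)^{\alpha-\frac12} \cos(su)\, du, \qquad c_\alpha := \frac{\Gamma(\alpha+1)}{\sqrt{\pi}\,\Gamma(\alpha+\frac12)}, $$
where $c_\alpha$ is precisely the constant for which $c_\alpha \int_{-1}^{1}(1-u^2)^{\alpha-\frac12}\,du = 1$, so that $j_\alpha(0)=1$. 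Subtracting from $1$ I would obtain
$$ 1 - j_\alpha(s) = c_\alpha \int_{-1}^{1} (1-u^2)^{\alpha-\frac12}\bigl(1-\cos(su)\bigr)\, du. $$
Since $\alpha - \tfrac12 > -1$ the weight is nonnegative and integrable, and $1-\cos(su) \ge 0$, so this representation immediately gives $0 \le 1 - j_\alpha(s)$; estimating $|\cos|\le 1$ also yields $|j_\alpha(s)| \le 1$, hence $1 - j_\alpha(s) \le 2$ for all $s$.

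For the two-sided bound I would use the elementary inequalities $\tfrac{2\theta^2}{\pi^2} \le 1 - \cos\theta \le \tfrac{\theta^2}{2}$, valid for $|\theta| \le \pi$ (both follow from $1-\cos\theta = 2\sin^2\tfrac{\theta}{2}$ together with Jordan's inequality $\sin x \ge \tfrac{2x}{\pi}$ on $[0,\tfrac{\pi}{2}]$ and $\sin x \le x$). In the regime $s \le 1$ one has $|su| \le 1 < \pi$ throughout the integral, so these bounds apply pointwise and produce
$$ \frac{2c_\alpha}{\pi^2}\,I_\alpha\, s^2 \;\le\; 1 - j_\alpha(s) \;\le\; \frac{c_\alpha}{2}\,I_\alpha\, s^2, \qquad I_\alpha := \int_{-1}^{1} u^2 (1-u^2)^{\alpha-\frac12}\,du, $$
where $I_\alpha = B(\tfrac32,\alpha+\tfrac12)$ is finite and strictly positive. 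Since $\min\{1,s^2\}=s^2$ here, this settles both inequalities on $s \le 1$.

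In the regime $s \ge 1$ one has $\min\{1,s^2\}=1$, so the upper bound is immediate from $1 - j_\alpha(s) \le 2$. The lower bound reduces to producing a constant with $1 - j_\alpha(s) \ge c_{1,\alpha}$, i.e.\ to showing $M := \sup_{s\ge 1} j_\alpha(s) < 1$, and this is the only genuinely non-routine point. I would argue it in two steps: first, by the Riemann--Lebesgue lemma applied to the integral representation (the weight lies in $L^1(-1,1)$), $j_\alpha(s) \to 0$ as $s \to \infty$, so there is $R$ with $j_\alpha(s) \le \tfrac12$ for $s \ge R$; second, on the compact interval $[1,R]$ the continuous function $j_\alpha$ attains its maximum, which is strictly below $1$ because the displayed integrand is positive on a set of positive measure whenever $s>0$, forcing $1-j_\alpha(s)>0$ pointwise. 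Combining these, $M<1$ and $1 - j_\alpha(s) \ge 1-M$ on $s\ge 1$. Taking $c_{1,\alpha} = \min\{\tfrac{2c_\alpha}{\pi^2}I_\alpha,\,1-M\}$ and $c_{2,\alpha} = \max\{\tfrac{c_\alpha}{2}I_\alpha,\,2\}$ then gives the asserted estimate for all $s = \lambda t \ge 0$. The main obstacle, as noted, is converting the pointwise strict inequality $j_\alpha<1$ into the uniform gap $\sup_{s\ge 1}j_\alpha(s)<1$ via the decay at infinity and a compactness argument.
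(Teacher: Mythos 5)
Your argument is correct and complete. Note, however, that the paper itself gives no proof of this lemma at all: it is quoted verbatim from Bray--Pinsky \cite{BrayPinsky}, so there is no in-paper argument to compare against. What you supply is a self-contained proof via the Poisson (Mehler) integral representation $j_\alpha(s)=c_\alpha\int_{-1}^{1}(1-u^2)^{\alpha-1/2}\cos(su)\,du$, which is valid precisely in the stated range $\alpha>-\tfrac12$ and cleanly separates the two regimes: for $s\le 1$ the pointwise bounds $\tfrac{2\theta^2}{\pi^2}\le 1-\cos\theta\le\tfrac{\theta^2}{2}$ on $|\theta|\le\pi$ give the two-sided $s^2$ estimate with the explicit constant $I_\alpha=B\bigl(\tfrac32,\alpha+\tfrac12\bigr)$, and for $s\ge 1$ you correctly identify the only delicate point, namely upgrading the pointwise strict inequality $j_\alpha(s)<1$ to a uniform gap $\sup_{s\ge1}j_\alpha(s)<1$, which you obtain by combining Riemann--Lebesgue decay with compactness on $[1,R]$. (The strict pointwise inequality holds because for $s>0$ the integrand $1-\cos(su)$ vanishes only on a countable set of $u$.) This is essentially the standard proof one would find in the cited source; an alternative for the small-$s$ regime is the series expansion $1-j_\alpha(s)=\tfrac{s^2}{4(\alpha+1)}+O(s^4)$, but your integral-representation route avoids having to control the error term uniformly and yields explicit constants, so nothing is lost.
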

	   \begin{lem} \label{Lembes} Let $\alpha >\frac{-1}{2}$ and $t_0 >0.$ Then, for all $\lambda \in \mathbb{R}$, there exist a constant $c_1>0$ such that for all $0 \leq t \leq t_0,$ the function $\phi_\lambda$ satisfies 
	   $$|1- \phi_\lambda(t)| \geq c_1 |1-j_\alpha(\lambda t) |,$$  where $j_\alpha$ is the usual Bessel function of first kind normalized by $j_\alpha(0)=1.$
	   \end{lem}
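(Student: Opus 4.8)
The plan is to establish the stronger uniform estimate
\[ 1-\phi_\lambda(t)\ \ge\ c\,\min\{1,(\lambda t)^2\}\qquad (0\le t\le t_0,\ \lambda\in\mathbb{R}) \]
with $c=c(\alpha,\beta,t_0)>0$; combined with the two-sided Bessel estimate stated immediately above this lemma, which gives $1-j_\alpha(\lambda t)\le c_{2,\alpha}\min\{1,(\lambda t)^2\}$, this yields the assertion with $c_1=c/c_{2,\alpha}$ (and a fortiori the $\lambda$-dependent formulation). Since $\phi_\lambda=\phi_{-\lambda}$ I may assume $\lambda\ge0$; the case $\lambda=0$ is vacuous and the range $\lambda t\ge1$ is exactly the third bullet of Lemma~\ref{estijac} (there $\min\{1,(\lambda t)^2\}=1$). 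Note that the second bullet of Lemma~\ref{estijac} supplies only the matching \emph{upper} bound $1-\phi_\lambda(t)\le\frac{t^2}{2}(\lambda^2+\frac{Q^2}{4})$, so the whole difficulty is the lower bound $1-\phi_\lambda(t)\ge c(\lambda t)^2$ in the range $0<\lambda t\le1$, $0<t\le t_0$, which I would split according to the size of $\lambda$.

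For $\lambda$ in a bounded range $0<\lambda\le\Lambda$ I would argue by compactness. Using the relation $\phi_\lambda(t)=\phi_{2\lambda}^{(\alpha,\beta)}(t/2)$ and the power series of the Jacobi function, $1-\phi_\lambda(t)$ vanishes exactly to second order at $t=0$, so $(\lambda,t)\mapsto(1-\phi_\lambda(t))/(\lambda t)^2$ extends continuously to $t=0$ with a finite positive value and is continuous and strictly positive on the compact region $\{0\le t\le t_0,\ \lambda t\le1,\ 0<\lambda\le\Lambda\}$ (strict positivity because $\phi_\lambda(t)<1$ for $t>0$ and real $\lambda$). It therefore attains there a positive minimum, giving the desired bound for all $\lambda\le\Lambda$.

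For large $\lambda$ (say $\lambda>\Lambda$) one has $t\le1/\lambda$, so $t$ is automatically small and I would work with the hypergeometric representation $\phi_\mu^{(\alpha,\beta)}(r)={}_2F_1\!\big(\tfrac{\rho+i\mu}{2},\tfrac{\rho-i\mu}{2};\alpha+1;-\sinh^2 r\big)$, $\rho=\alpha+\beta+1$, where $\mu=2\lambda$, $r=t/2$, $\mu r=\lambda t$. As $\tfrac{\rho\pm i\mu}{2}$ are complex conjugates for real $\mu$, the series in $z=-\sinh^2 r$ is alternating with positive terms $b_n$, and $b_1=\tfrac{\mu^2+\rho^2}{4(\alpha+1)}\sinh^2 r$. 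When $\lambda t\le\delta$ (so $r$ is small) the ratios $b_{n+1}/b_n$ are $\le\tfrac12$, the first term dominates, and $1-\phi_\lambda(t)\ge\tfrac12 b_1\gtrsim(\lambda t)^2$. In the complementary band $\delta\le\lambda t\le1$ I would invoke the Mehler--Heine asymptotics: letting $\mu\to\infty$ in the series shows $\phi_\lambda(s/\lambda)\to{}_0F_1(;\alpha+1;-s^2/4)=j_\alpha(s)$ uniformly for $s=\lambda t\in[\delta,1]$, so $(1-\phi_\lambda(t))/(\lambda t)^2\to(1-j_\alpha(s))/s^2\ge c_{1,\alpha}>0$ by the two-sided Bessel estimate, whence the bound holds (with constant $c_{1,\alpha}/2$) once $\Lambda$ is large enough. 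Combining the three regimes yields the uniform lower bound.

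The main obstacle is the uniformity in $\lambda$ across the transitional band $\delta\le\lambda t\le1$: the elementary alternating-series estimate degrades there because the ratios $b_{n+1}/b_n$ need no longer be small, and one cannot simply Taylor-expand since the remainder constants grow with $\lambda$. What rescues the argument is structural — the correct $\lambda\to\infty$ limit of the rescaled spherical function is precisely the Bessel function $j_\alpha$, so the Bessel lower bound of the preceding lemma is exactly the quantity that transfers. The only genuinely analytic inputs needed beyond this are the locally uniform Mehler--Heine convergence and the strict inequality $\phi_\lambda(t)<1$ for $t>0$, the latter being immediate from the alternating-series representation for small $t$ and standard for all $t$.
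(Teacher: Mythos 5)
Your argument is essentially sound, but note that the paper does not prove this lemma at all: it is imported verbatim from \cite{BrayPinsky}, so there is no in-paper proof to compare against. What you supply is a self-contained substitute, and in fact you prove the \emph{uniform} version ($c_1$ independent of $\lambda$), which is stronger than the literal ``for all $\lambda$ there exists $c_1$'' phrasing but is exactly what the Nikolskii--Stechkin proof later needs when it pulls $c_1$ outside a supremum over $|\lambda|\le\nu$. Your reduction to the lower bound $1-\phi_\lambda(t)\ge c\min\{1,(\lambda t)^2\}$, followed by the three-regime split (third bullet of Lemma \ref{estijac} for $\lambda t\ge 1$; compactness for bounded $\lambda$; alternating ${}_2F_1$ series plus the Bessel limit $\phi_{\mu}^{(\alpha,\beta)}(s/\mu)\to j_\alpha(s)$ for large $\lambda$) is a legitimate route, and the structural observation that the rescaled spherical function converges to precisely the Bessel function of the comparison lemma is the right reason the statement is true. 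By contrast, Bray and Pinsky obtain the inequality more directly from integral representations of $\phi_\lambda$ rather than by a limiting/compactness argument; their route avoids any appeal to locally uniform Mehler--Heine asymptotics, while yours is more modular and makes the constant's dependence on $(\alpha,\beta,t_0)$ transparent.

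Two small points you should tighten. First, the region $\{0\le t\le t_0,\ \lambda t\le 1,\ 0<\lambda\le\Lambda\}$ is not compact because of the open condition at $\lambda=0$; this is harmless since $(1-\phi_\lambda(t))/(\lambda t)^2\ge (1-\phi_0(t))/(\lambda t)^2\ge c_0\lambda^{-2}\to\infty$ as $\lambda\to 0^+$ (using $\phi_\lambda(t)\le\phi_0(t)$ and $(1-\phi_0(t))/t^2$ bounded below on $(0,t_0]$), but say so. Second, the locally uniform convergence $\phi_\mu^{(\alpha,\beta)}(\mu^{-1}s)\to j_\alpha(s)$ should be attributed explicitly (it is in Koornwinder's survey \cite{Koorn}); as written it is asserted rather than justified, and it is the one genuinely nonelementary input in the transitional band $\delta\le\lambda t\le 1$, as you yourself correctly identify.
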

	   
	   \section{Main results}
	 In this section we present our main results. Throughout this section, we denote a Damek-Ricci space by $S$. We denote by $L^2(S)$ the Hilbert space of all square integrable function on $S$ with respect to Haar measure $\lambda$ on $S.$  We begin this section by recalling the definition of Sobolev spaces on Damek-Ricci spaces. 
	   
The Sobolev space $W_2^m(S)$ on Damek-Ricci space $S$ is defined by 
$$W_2^m(S):= \{ f \in L^2(S): \Delta_S^l f \in L^2(S), \quad l=1,2,\ldots, m \}.$$
The space $W_2^m(S)$ can be equipped with seminorm $|f|_{W_2^m(S)}:= \|\Delta_S^m f\|_{2}$ and with the norm $\|f\|_{W_2^m(S)} = \|f\|_2+ \|\Delta_S^m f\|_{2}.$

The modulus of smoothness (continuity) $\Omega_k$ is defined by using the spherical mean operator $M_t$ as follows:
$$\Omega_k(f, \delta)_2:= \sup_{0<t \leq \delta} \|\Delta_t^k f\|_2,$$ where $\Delta_t^k f= (I-M_t)^k f.$
The modulus of smoothness $\Omega_k(f, \delta)_2$ satisfies the following properties:
\begin{itemize}
    \item[(i)] The function $\delta \mapsto \Omega_k(f, \delta)_2$ is a decreasing function and satisfies  $$\Omega_k(f \pm g, \delta)_2 \leq \Omega_k(f, \delta)_2+\Omega_k(g, \delta)_2$$ for all $f, g \in L^2(S).$
    \item[(ii)] $\Omega_k(f, \delta)_2 \leq (\phi_0(a_t)+1)^k \|f\|_2$ and $\Omega_k(f, \delta)_2 \leq (1+\phi_0(a_t))^{k-l} \Omega_l(f, \delta)_2$ for $l \leq k.$ 
    \item [(iii)] If $f \in W^m_2(S)$ then we have $\Omega_k(f, \delta)_2 \leq \delta^{2k} \|\Delta_S^k f\|_2, \quad k \leq m.$ 
\end{itemize}
The proof of (i) and (ii) follows from the definition of modulus of continuity and norm estimate for $M_t$ on $L^2(S).$ To show (iii), we note, by Plancherel formula, that,
\begin{align*}
    \|\Delta_t^k f\|_2^2= \int_0^\infty \int_N |\widetilde{(\Delta_t^k f)}(\lambda, n)|^2\, |c(\lambda)|^{-2} d\lambda\, dn.  
\end{align*}
Since $\widetilde{(\Delta_t^k f)}(\lambda, n)= |1-\phi_\lambda(a_t)|^k \widetilde{f}(\lambda, n)$ we have, by Lemma \ref{estijac}, that
\begin{align*}
        \|\Delta_t^k f\|_2^2&= \int_0^\infty \int_N |1-\phi_\lambda(a_t)|^{2k} |\widetilde{f}(\lambda, n)|\, |c(\lambda)|^{-2} d\lambda\, dn  \\& \leq t^{4k} \int_0^\infty \int_N (\lambda^2+\frac{Q^2}{4})^{2k} |\widetilde{f}(\lambda, n)|\, |c(\lambda)|^{-2} d\lambda\, dn \\&= t^{4k} \int_0^\infty \int_N |\widetilde{\Delta_S^k f}(\lambda, n)|^2\,  |c(\lambda)|^{-2} d\lambda\, dn = t^{4k} \|\Delta_S^k f\|_{L^2(S)}^2.
\end{align*}
\subsection{ Direct Jackson theorem} This subsection is devoted for proving the Direct Jackson theorem of approximations theory for Damek-Ricci spaces. For the approximation we will use the functions of bounded spectrum. The functions of bounded spectrum were used by Platonov \cite{Platov9, Platov8, Platov3, Platov5} to prove Jackson type direct theorem for Jacobi transform and for symmetric spaces. Such kind of functions also appear in the work of Pesenson \cite{Pens3} under the name of Paley-Wiener functions for studying approximation theory on homogeneous manifolds.  

A function $f \in L^2(S)$ is called  a {\it function with bounded spectrum} (or a {\it Paley-Wiener function}) of order $\nu>0$ if $$\mathcal{F}f(\lambda, n)=0\quad \text{for}\,\,|\lambda|>\nu.$$ Denote the space of all function on $S$ with bounded spectrum of order $\nu$  by $\text{BS}_\nu(S).$ 
The best approximation of a function $f \in L^2(S)$ by the functions in $\text{BS}_\nu(S)$ is defined by $$E_\nu(f):=\inf_{g \in \text{BV}_\nu(S)}\|f-g\|_{L^2(S)}.$$

\begin{lem} \label{3.1lema}
    Let $\nu>0.$ For any function $f \in L^2(S),$ the function $P_\nu(f)$ defined by 
    $$P_{\nu}(f)(x):=\mathcal{F}^{-1}(\mathcal{F}f(\lambda, n) \chi_\nu(\lambda)),$$ where $\chi_\nu$ is a function defined by $\chi_\nu(\lambda)=1$ for $|\lambda|\leq \nu$ and $0$ otherwise, satisfies the following properties:
    \begin{itemize}
        \item [(i)] For every $f \in L^2(S),$ $P_\nu(f) \in \text{BS}_\nu(S).$
        \item[(ii)] For every function $f \in \text{BS}_\nu(S),$ $P_\nu(f)=f.$
        \item[(iii)] If $f \in L^2(S)$ then $\|P_\nu(f)\|_{L^2(S)}\leq \|f\|_{L^2(S)}$ and $\|f-P_\nu(f)\|_{L^2(S)} \leq 4 E_\nu(f).$   \end{itemize}
\end{lem}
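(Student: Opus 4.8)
The plan is to derive all three properties directly from the multiplier description of $P_\nu$ on the Fourier side, together with the Plancherel isometry and the inversion formula for the Helgason--Fourier transform recorded in Section~\ref{Ess}. The starting observation is that $\mathcal{F}(P_\nu f)(\lambda,n)=\chi_\nu(\lambda)\,\widetilde{f}(\lambda,n)$, and that since $0\le\chi_\nu\le 1$ the product $\chi_\nu\widetilde{f}$ again lies in $L^2(\mathbb{R}_+\times N,\,C|c(\lambda)|^{-2}\,d\lambda\,dn)$, so that $P_\nu f$ is a well-defined element of $L^2(S)$. This is the only point requiring a word of justification, and it is immediate from the Plancherel theorem.

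For (i), I would simply note that $\chi_\nu(\lambda)=0$ for $|\lambda|>\nu$, hence $\mathcal{F}(P_\nu f)(\lambda,n)=\chi_\nu(\lambda)\widetilde{f}(\lambda,n)=0$ for $|\lambda|>\nu$, which is exactly the defining property of $\text{BS}_\nu(S)$. For (ii), if $f\in\text{BS}_\nu(S)$ then $\widetilde{f}(\lambda,n)=0$ for $|\lambda|>\nu$, so $\chi_\nu\widetilde{f}=\widetilde{f}$ identically (the two agree on $|\lambda|\le\nu$, where $\chi_\nu=1$, and both vanish on $|\lambda|>\nu$); applying the inversion formula, extended to $L^2(S)$ via the Plancherel isometry, gives $P_\nu f=\mathcal{F}^{-1}(\widetilde{f})=f$.

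For (iii), the norm bound follows from Plancherel: since $\chi_\nu^2=\chi_\nu\le 1$,
\begin{align*}
\|P_\nu f\|_{L^2(S)}^2 &= \int_0^\infty\int_N |\chi_\nu(\lambda)|^2\,|\widetilde{f}(\lambda,n)|^2\,|c(\lambda)|^{-2}\,d\lambda\,dn \\
&\le \int_0^\infty\int_N |\widetilde{f}(\lambda,n)|^2\,|c(\lambda)|^{-2}\,d\lambda\,dn = \|f\|_{L^2(S)}^2.
\end{align*}
For the approximation estimate I would exploit the linearity of $P_\nu$ together with (ii). Given any $g\in\text{BS}_\nu(S)$, property (ii) yields $P_\nu g=g$, so $f-P_\nu f=(f-g)-P_\nu(f-g)$, and the triangle inequality combined with the norm bound just proved gives
$$\|f-P_\nu f\|_{L^2(S)}\le \|f-g\|_{L^2(S)}+\|P_\nu(f-g)\|_{L^2(S)}\le 2\|f-g\|_{L^2(S)}.$$
Taking the infimum over $g\in\text{BS}_\nu(S)$ produces $\|f-P_\nu f\|_{L^2(S)}\le 2E_\nu(f)$, which is stronger than the claimed bound $4E_\nu(f)$.

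There is no serious obstacle here: the content is entirely contained in the Plancherel/inversion machinery of Section~\ref{Ess}, and the only care needed is the well-definedness remark above. I would add the observation that, because $\chi_\nu$ is a real idempotent multiplier, $P_\nu$ is in fact the orthogonal projection of $L^2(S)$ onto the closed subspace $\text{BS}_\nu(S)$: one checks via Plancherel that $f-P_\nu f$, whose transform is supported in $\{|\lambda|>\nu\}$, is orthogonal to every element of $\text{BS}_\nu(S)$, whose transform is supported in $\{|\lambda|\le\nu\}$. This identifies $\|f-P_\nu f\|_{L^2(S)}$ with the exact distance $E_\nu(f)$, so the estimate in (iii) in fact holds with the sharp constant $1$.
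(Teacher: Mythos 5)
Your proof is correct and follows essentially the same route as the paper: the multiplier identity $\mathcal{F}(P_\nu f)=\chi_\nu\widetilde{f}$, Plancherel for the norm bound, the inversion formula for (ii), and the decomposition $f-P_\nu f=(f-g)-P_\nu(f-g)$ for the approximation estimate. Your only departure is to take the infimum over all $g\in\text{BS}_\nu(S)$ rather than fixing a near-minimizer with $\|f-g\|\le 2E_\nu(f)$ as the paper does, which legitimately improves the constant from $4$ to $2$, and your closing observation that $P_\nu$ is the orthogonal projection onto the closed subspace $\text{BS}_\nu(S)$ (so the sharp constant is $1$) is also correct.
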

\begin{proof}
    \begin{itemize}
        \item[(i)] This is trivial to see. Indeed, by definition we have  $$\mathcal{F} P_{\nu}(f)(x)=\mathcal{F}f(\lambda, n) \chi_\nu(\lambda)=0$$ for $|\lambda|>\nu.$ Therefore, $P_\nu(f) \in BS_\nu(S).$
        \item[(ii)] Let $f \in BS_\nu(S).$ Then $\mathcal{F}f(\lambda, n)=0$ for $ |\lambda|>\nu$ and $\mathcal{F}P_\nu(f)(\lambda, n)=\mathcal{F}f(\lambda, n)$ for $|\lambda| \leq \nu.$ So, by using the inversion formula we have 
        \begin{align*}
            P_\nu(f)(x)&=C\int_{\mathbb{R}} \int_N \mathcal{F}P_\nu(f)(\lambda, n)\,\, |c(\lambda)|^{-2} d\lambda\, dn\\&= C \int_{|\lambda| \leq \nu} \int_N \mathcal{F}f(\lambda, n)\,\, |c(\lambda)|^{-2} d\lambda\, dn \\&= C \int_{\mathbb{R}} \int_N \mathcal{F}f(\lambda, n)\,\, |c(\lambda)|^{-2} d\lambda\, dn= f(x).
        \end{align*}
        \item[(iii)] Take $f \in L^2(S).$ By Plancherel formula, we get 
        \begin{align*}
            \|P_\nu(f)\|_{L^2(S)}^2 &= \int_{0}^\infty \int_N |\mathcal{F}P_\nu(f)(\lambda, n)|^2\, |c(\lambda)|^{-2}\,d\lambda\,dn \\&=\int_{0}^\nu \int_N |\mathcal{F}f(\lambda, n)|^2\, |c(\lambda)|^{-2}\,d\lambda\,dn \\&\leq  \int_{0}^\infty \int_N |\mathcal{F}f(\lambda, n)|^2\, |c(\lambda)|^{-2}\,d\lambda\,dn = \|f\|_{L^2(S)}^2.
        \end{align*}
        Also, for proving second inequality take any $g\in BS_\nu(S)$ 
        such that 
        $$\|f-g\| \leq 2 E_\nu(f)_2.$$
        Now, by using the fact that $P_\nu(g)=g$ we get
        \begin{align*}
            \|f-P_\nu(f)\|_{L^2(S)}&=\|f-g-P_{\nu}(g-f)\|_{L^2(S)} \\&\leq \|f-g\|_{L^2(S)}+\|f-g\|_{L^2(S)} \leq 4 E_\nu(f)_2.
        \end{align*}\end{itemize}\end{proof}

The following two theorems are analogues of Jackson's direct theorem in classical approximation theorem for Damek-Ricci spaces.  

\begin{thm} \label{vishthm3.2}
    If $f \in L^2(S)$ then for every $\nu>0$ we have 
    \begin{equation} \label{2vish}
        E_\nu(f) \leq c_k \,\,\Omega_k\left(f, \frac{1}{\nu}\right)_2, \quad k \in \mathbb{N},
    \end{equation} where $c_k$ is a constant. 
\end{thm}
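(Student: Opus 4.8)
The plan is to use $P_\nu(f)$ itself as the competing approximant. Since $P_\nu(f)\in\mathrm{BS}_\nu(S)$ by Lemma \ref{3.1lema}(i), the very definition of the best approximation gives $E_\nu(f)\le\|f-P_\nu(f)\|_{L^2(S)}$, so it suffices to bound $\|f-P_\nu(f)\|_2$ from above by $\Omega_k(f,1/\nu)_2$. Applying the Plancherel formula together with the definition of $P_\nu$, the quantity $\|f-P_\nu(f)\|_2^2$ equals the spectral integral of $|\mathcal{F}f(\lambda,n)|^2|c(\lambda)|^{-2}$ over the tail region $|\lambda|>\nu$ (integrated also over $N$), since $P_\nu$ simply truncates the transform to $|\lambda|\le\nu$.

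Next I would compare this tail integral with a single $k$-th difference $\|\Delta_t^k f\|_2$ for a well-chosen radius $t$. Using $\widetilde{\Delta_t^k f}(\lambda,n)=(1-\phi_\lambda(a_t))^k\,\widetilde f(\lambda,n)$ and Plancherel, $\|\Delta_t^k f\|_2^2$ is the spectral integral of $|1-\phi_\lambda(a_t)|^{2k}\,|\mathcal{F}f(\lambda,n)|^2|c(\lambda)|^{-2}$ over all $\lambda\ge 0$. The natural choice is $t=1/\nu$, which is admissible in the supremum defining $\Omega_k(f,1/\nu)_2$, so that $\|\Delta_{1/\nu}^k f\|_2\le\Omega_k(f,1/\nu)_2$. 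For this $t$ and any $\lambda>\nu$ one has $\lambda t=\lambda/\nu>1$, placing us precisely in the regime where a lower bound for $1-\phi_\lambda$ is available.

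The crux is a uniform lower bound $|1-\phi_\lambda(a_{1/\nu})|\ge c$ valid for all $\lambda>\nu$, with $c>0$ independent of $\lambda$ and $\nu$; this is the content of the third estimate of Lemma \ref{estijac} in the range $\lambda t\ge 1$, and it may alternatively be derived by combining Lemma \ref{Lembes} with the two-sided Bessel bound $1-j_\alpha(\lambda t)\ge c_{1,\alpha}\min\{1,(\lambda t)^2\}$, which yields $|1-\phi_\lambda(t)|\ge c_1c_{1,\alpha}$ as soon as $\lambda t\ge 1$. Granting this, I restrict the spectral integral for $\|\Delta_{1/\nu}^k f\|_2^2$ to $\lambda>\nu$, estimate $|1-\phi_\lambda(a_{1/\nu})|^{2k}\ge c^{2k}$ there, and recognise the remaining integral as $c^{2k}\|f-P_\nu(f)\|_2^2$. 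This gives $\|f-P_\nu(f)\|_2\le c^{-k}\|\Delta_{1/\nu}^k f\|_2\le c^{-k}\Omega_k(f,1/\nu)_2$, and hence \eqref{2vish} with $c_k=c^{-k}$.

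The main obstacle is establishing, and correctly invoking, the uniform lower bound for $|1-\phi_\lambda(t)|$ when $\lambda t\ge 1$. The delicate point is the \emph{uniformity} of the constant: it must not deteriorate as $\lambda\downarrow\nu$ or as $\lambda\to\infty$. If one argues through Lemma \ref{Lembes} one must also respect its restriction $t\le t_0$, so the regime of small $\nu$ (where $t=1/\nu$ is large) requires separate care — for instance by choosing a fixed admissible radius $t\le\min\{t_0,1/\nu\}$ and handling the bounded band of frequencies $\nu<\lambda\lesssim 1/t_0$ directly via the quadratic Bessel estimate. Once the uniform lower bound is in place, the remainder is a routine application of Plancherel's theorem.
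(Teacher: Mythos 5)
Your argument is essentially identical to the paper's proof: both use $P_\nu(f)$ as the competitor, compute $\|f-P_\nu(f)\|_{L^2(S)}^2$ via Plancherel as the spectral tail over $\lambda\geq\nu$, invoke the lower bound $|1-\phi_\lambda(1/\nu)|\geq c$ for $\lambda\geq\nu$ from the third part of Lemma \ref{estijac}, and conclude with $c_k=c^{-k}$. Your closing remark about the uniformity of the constant is a fair caution (the paper's Lemma \ref{estijac} is even stated with the constant ``depending only on $\lambda$,'' which is surely a slip for ``independent of $\lambda$''), but the paper itself simply cites the lemma and does not carry out the extra case analysis you sketch.
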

\begin{proof} The Plancherel formula gives that 
\begin{align*}
    \|f-P_\nu(f)\|_{L^2(S)}^2 &= \int_0^\infty \int_N |\mathcal{F}(f-P_\nu(f))(\lambda, n)|^2\,\, |c(\lambda)|^{-2}\, d\lambda\, dn \\&= \int_0^\infty \int_N |1-\chi_{\nu}(\lambda)|^2\,|\mathcal{F}(f(\lambda, n)|^2\,\, |c(\lambda)|^{-2}\, d\lambda\, dn \\&= \int_{\lambda \geq \nu}\int_N |\mathcal{F}(f(\lambda, n)|^2\,\, |c(\lambda)|^{-2}\, d\lambda\, dn. 
\end{align*}
By Lemma \ref{estijac} we have $|1-\phi_\lambda\left(\frac{1}{\nu} \right)| \geq c$ for $\lambda \geq \nu.$ Therefore, by Plancherel formula, we get 
\begin{align*}
    \|f-P_\nu(f)\|_{L^2(S)}^2 &\leq c^{-2k} \int_{\lambda \geq \nu} \int_N |1-\phi_\lambda\left(1/\nu \right)|^{2k} |\mathcal{F}f(\lambda, n)|^{2}\,|c(\lambda)|^{-2}\,d\lambda\, dn \\&= c^{-2k} \int_{\lambda \geq \nu} \int_N |\mathcal{F}((I-M_{1/\nu})^k f)(\lambda, n)|^2 \,|c(\lambda)|^{-2}\,d\lambda\, dn \\&\leq c^{-2k} \int_0^\infty \int_N |\mathcal{F}((I-M_{1/\nu})^k f)(\lambda, n)|^2 \,|c(\lambda)|^{-2}\,d\lambda\, dn \\&= c^{-2k} \|(I-M_{1/\nu})^k f\|_{L^2(S)}^2.
\end{align*}
Therefore, as $P_\nu(f) \in BS_\nu(S),$ we get 
    \begin{align*}
       E_\nu(f)= \inf_{g \in \text{BV}_\nu(S)}\|f-g\|_{L^2(S)} &\leq  \|f-P_\nu(f)\|_{L^2(S)} \leq c^{-k} \|(I-M_{1/\nu})^k f\|_{L^2(S)}\\&= c^{-k} \|\Delta_{1/\nu}^k f\|_{L^2(S)} \leq  c_k\,\, \Omega_k\left(f, \frac{1}{\nu} \right)_2,
    \end{align*}
    proving \eqref{2vish} and hence the theorem is proved. 
 \end{proof}
 \begin{thm}
     Let $r \in \mathbb{N}$ and $\nu>0.$ Assume that $f, \Delta_Sf, \Delta^2f, \ldots, \Delta^rf$ are in $L^2(S).$ Then 
     \begin{equation} \label{3vish}
        E_\nu(f) \leq c_k' \,\, \nu^{-2r}\Omega_k\left(\Delta_S^rf, \frac{1}{\nu}\right)_2, \quad k \in \mathbb{N},
    \end{equation} where $c_k'$ is a constant.
 \end{thm}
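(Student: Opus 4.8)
The plan is to mimic the argument of Theorem \ref{vishthm3.2}, but to exploit the extra smoothness of $f$ encoded in $\Delta_S^r f \in L^2(S)$ in order to gain the factor $\nu^{-2r}$. As before, since $P_\nu(f) \in \mathrm{BS}_\nu(S)$, I would start from $E_\nu(f) \le \|f-P_\nu(f)\|_{L^2(S)}$ and apply the Plancherel formula together with the identity $\mathcal{F}(f-P_\nu(f))(\lambda,n) = (1-\chi_\nu(\lambda))\widetilde{f}(\lambda,n)$, which reduces the squared norm to the high-frequency integral $\int_{\lambda\ge\nu}\int_N |\widetilde{f}(\lambda,n)|^2 |c(\lambda)|^{-2}\,d\lambda\,dn$.

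The key new ingredient is the spectral relation $\widetilde{\Delta_S^r f}(\lambda,n) = -(\lambda^2+\tfrac{Q^2}{4})^r \widetilde{f}(\lambda,n)$ recorded earlier, which I would use to write $\widetilde{f}(\lambda,n) = -(\lambda^2+\tfrac{Q^2}{4})^{-r}\widetilde{\Delta_S^r f}(\lambda,n)$ on the support $\lambda\ge\nu$. On this region $\lambda^2+\tfrac{Q^2}{4}\ge \nu^2$, so $(\lambda^2+\tfrac{Q^2}{4})^{-r}\le \nu^{-2r}$, giving the pointwise bound $|\widetilde{f}(\lambda,n)|\le \nu^{-2r}\,|\widetilde{\Delta_S^r f}(\lambda,n)|$. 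This is exactly where the $\nu^{-2r}$ factor in \eqref{3vish} originates.

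Next, as in the proof of Theorem \ref{vishthm3.2}, I would invoke the third item of Lemma \ref{estijac} to obtain $|1-\phi_\lambda(1/\nu)|\ge c$ for $\lambda\ge\nu$, hence $1\le c^{-2k}|1-\phi_\lambda(1/\nu)|^{2k}$ on the same region. Combining this with the previous bound yields
$$|\widetilde{f}(\lambda,n)|^2 \le c^{-2k}\,\nu^{-4r}\,|1-\phi_\lambda(1/\nu)|^{2k}\,|\widetilde{\Delta_S^r f}(\lambda,n)|^2$$
for $\lambda\ge\nu$. Since $(1-\phi_\lambda(1/\nu))^k \widetilde{\Delta_S^r f}(\lambda,n) = \mathcal{F}\big((I-M_{1/\nu})^k\Delta_S^r f\big)(\lambda,n) = \mathcal{F}(\Delta_{1/\nu}^k\Delta_S^r f)(\lambda,n)$, the right-hand side is recognized as the squared modulus of the Fourier transform of $\Delta_{1/\nu}^k\Delta_S^r f$, up to the constant $c^{-2k}\nu^{-4r}$.

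Finally, integrating over $\lambda\ge\nu$, enlarging the domain of integration to all of $(0,\infty)$, and applying the Plancherel formula once more gives $\|f-P_\nu(f)\|_{L^2(S)}\le c^{-k}\,\nu^{-2r}\,\|\Delta_{1/\nu}^k\Delta_S^r f\|_{L^2(S)}$. Since $\|\Delta_{1/\nu}^k\Delta_S^r f\|_2 \le \sup_{0<t\le 1/\nu}\|\Delta_t^k\Delta_S^r f\|_2 = \Omega_k(\Delta_S^r f,1/\nu)_2$ straight from the definition of the modulus of smoothness, the estimate \eqref{3vish} follows with $c_k'=c^{-k}$. I do not anticipate a serious obstacle here; the only point requiring care is that the two pointwise estimates — the derivative gain $(\lambda^2+\tfrac{Q^2}{4})^{-r}\le\nu^{-2r}$ and the spherical-function lower bound $|1-\phi_\lambda(1/\nu)|\ge c$ — both hold precisely on the high-frequency region $\lambda\ge\nu$, which is exactly where $f-P_\nu(f)$ is spectrally supported, so that they may be applied simultaneously.
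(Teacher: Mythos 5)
Your argument is correct, and it reaches \eqref{3vish} by a slightly different route than the paper. The paper first proves the intermediate operator inequality \eqref{vish(4)}, $\|(I-M_t)g\|_{L^2(S)}\le \tfrac{t^2}{2}\|\Delta_S g\|_{L^2(S)}$ (via the upper bound $|1-\phi_\lambda(t)|\le\tfrac{t^2}{2}(\lambda^2+\tfrac{Q^2}{4})$ from Lemma \ref{estijac}), then runs the argument of Theorem \ref{vishthm3.2} with $k+r$ powers of $(I-M_{1/\nu})$ to get \eqref{vish(5)}, and finally trades $r$ of those powers for $r$ Laplacians by iterating \eqref{vish(4)}, ending with the constant $c^{-(k+r)}2^{-r}$. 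You instead stay entirely on the Fourier side: since $f-P_\nu(f)$ is spectrally supported in $\lambda\ge\nu$, you extract the factor $\nu^{-2r}$ from the pointwise bound $(\lambda^2+\tfrac{Q^2}{4})^{-r}\le\nu^{-2r}$ there, and only need $k$ powers of $|1-\phi_\lambda(1/\nu)|\ge c$. This avoids the second item of Lemma \ref{estijac} and the intermediate inequality \eqref{vish(4)} altogether, and yields the cleaner constant $c^{-k}$ (at the cost of not producing \eqref{vish(4)}, which is of some independent interest but is not needed elsewhere in the paper). Your closing remark is the right one: both pointwise estimates are applied only on the region $\lambda\ge\nu$ where $\mathcal{F}(f-P_\nu(f))$ lives, so there is no gap.
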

 \begin{proof} Let $r \in \mathbb{N}$ and $t>0.$ Suppose that $f, \Delta_Sf, \Delta^2_Sf, \ldots, \Delta^r_Sf$ are in $L^2(S).$ Then Lemma \ref{estijac} and Plancherel formula give that 
 \begin{align*}
     \|(I-M_{t})f\|^2_{L^2(S)}&= \int_0^\infty \int_N |\mathcal{F}((I-M_{t})f)(\lambda, n)|^2 |c(\lambda)|^{-2}\, d\lambda\, dn \\&=\int_0^\infty \int_N |1-\phi_\lambda(a_t)|^2 |\mathcal{F}f(\lambda, n)|^2 |c(\lambda)|^{-2}\, d\lambda\, dn \\&\leq \frac{t^4}{4}  \int_0^\infty \int_N (\lambda^2+\frac{Q^2}{4})^2 |\mathcal{F}f(\lambda, n)|^2 |c(\lambda)|^{-2}\, d\lambda\, dn \\&=\frac{t^4}{4}  \int_0^\infty \int_N  |\mathcal{F}(\Delta_S f)(\lambda, n)|^2 |c(\lambda)|^{-2}\, d\lambda\, dn = \frac{t^4}{4} \|\Delta_S f\|^2_{L^2(S)}.
 \end{align*}
 Therefore, 
 \begin{equation} \label{vish(4)}
     \|(I-M_{t})f\|_{L^2(S)} \leq \frac{t^2}{2} \|\Delta_S f\|_{L^2(S)}.
 \end{equation}
 By proceeding similar to the proof of Theorem \ref{vishthm3.2} we get 
 \begin{equation} \label{vish(5)}
     \|f-P_\nu(f)\|_{L^2(S)} \leq c^{-(k+r)} \|(I-M_{1/\nu})^{k+r} f\|_{L^2(S)}.
 \end{equation}
 By applying inequality \eqref{vish(4)} on the right hand side of \eqref{vish(5)} $r$-times we obtain that 
 \begin{align*}
     \|f-P_\nu(f)\|_{L^2(S)} &\leq c^{-(k+r)} 2^{-r} \nu^{-2r} \|(I-M_{1/\nu})^{k} \Delta_S^rf\|_{L^2(S)} \\&= c_k'\,\, \nu^{-2r} \Omega_k \left(\Delta_S^rf, \frac{1}{\nu} \right)_2,
 \end{align*} where $c_k'=c^{-(k+r)} 2^{-r}.$ Now, the theorem follows from the definition of $E_\nu(f)$ by noting that
 \begin{align*}
     E_\nu(f)= \inf_{g \in \text{BV}_\nu(S)}\|f-g\|_{L^2(S)} \leq \|f-P_\nu(f)\|_{L^2(S)} \leq c_k'\,\, \nu^{-2r} \Omega_k \left(\Delta_S^rf, \frac{1}{\nu} \right)_2,  
 \end{align*} completing the proof.

 \end{proof}

 \subsection{Nikolskii-Stechkin inequality} In this subsection, we will prove Nikolskii-Stechkin inequality \cite{Niko} for Damek-Ricci spaces.  
 \begin{thm}
     For any $f \in L^2(S)$ and $\nu >0$ we have 
     \begin{equation}
          \|\Delta_S^k(P_\nu(f))\|_{L^2(S)} \leq c_3\, \nu^{2k} \|\Delta_{1/ \nu}^{k} f\|_{L^2(S)}, \quad k \in \mathbb{N}.
     \end{equation}
 \end{thm}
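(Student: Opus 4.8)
The plan is to reduce the stated inequality to a pointwise comparison of spectral multipliers and then to verify that comparison by splitting the spectral parameter $\lambda$ into a neighbourhood of the origin and its complement. Set $g:=P_\nu(f)$, so that $\mathcal{F}g(\lambda,n)=\widetilde{f}(\lambda,n)\chi_\nu(\lambda)$ is supported in $\{|\lambda|\le\nu\}$. Using $|\widetilde{\Delta_S^k g}(\lambda,n)|=(\lambda^2+\tfrac{Q^2}{4})^k|\widetilde{g}(\lambda,n)|$ together with the Plancherel formula we obtain
\begin{align*}
\|\Delta_S^k P_\nu(f)\|_{L^2(S)}^2=\int_0^\nu\int_N\Big(\lambda^2+\tfrac{Q^2}{4}\Big)^{2k}|\widetilde{f}(\lambda,n)|^2\,|c(\lambda)|^{-2}\,d\lambda\,dn,
\end{align*}
while $\widetilde{\Delta_{1/\nu}^k f}(\lambda,n)=(1-\phi_\lambda(a_{1/\nu}))^k\widetilde{f}(\lambda,n)$ and positivity of the integrand give
\begin{align*}
\|\Delta_{1/\nu}^k f\|_{L^2(S)}^2\ge\int_0^\nu\int_N|1-\phi_\lambda(a_{1/\nu})|^{2k}|\widetilde{f}(\lambda,n)|^2\,|c(\lambda)|^{-2}\,d\lambda\,dn.
\end{align*}
Consequently it suffices to find a constant $c_3$, independent of $f$, such that
\begin{align*}
\Big(\lambda^2+\tfrac{Q^2}{4}\Big)^{k}\le c_3\,\nu^{2k}\,|1-\phi_\lambda(a_{1/\nu})|^{k}\qquad(0\le\lambda\le\nu);
\end{align*}
squaring this and integrating $|\widetilde{f}(\lambda,n)|^2|c(\lambda)|^{-2}\,d\lambda\,dn$ over $[0,\nu]\times N$ then yields the theorem.

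Writing $t=1/\nu$ and using $\phi_\lambda(a_t)\le1$, the required estimate is equivalent to the lower bound
\begin{align*}
1-\phi_\lambda(a_t)\ge c_3^{-1/k}\,t^2\Big(\lambda^2+\tfrac{Q^2}{4}\Big)\qquad(0\le\lambda t\le1),
\end{align*}
which is exactly the reverse of the upper estimate $|1-\phi_\lambda(a_t)|\le\frac{t^2}{2}(\lambda^2+\frac{Q^2}{4})$ of Lemma \ref{estijac}; thus on the range $\lambda t\le1$ one expects the two-sided comparison $1-\phi_\lambda(a_t)\asymp t^2(\lambda^2+\frac{Q^2}{4})$. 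I would prove the lower bound by fixing a threshold $\lambda_0>0$ (say $\lambda_0=1$) and treating $\lambda\ge\lambda_0$ and $\lambda\le\lambda_0$ separately. For $\lambda_0\le\lambda\le\nu$ I would combine the two lemmas borrowed from \cite{BrayPinsky}: since $\lambda t=\lambda/\nu\le1$, the Bessel estimate gives $1-j_\alpha(\lambda t)\ge c_{1,\alpha}(\lambda t)^2$, and Lemma \ref{Lembes}—applicable because $t=1/\nu\le t_0$ once $\nu$ is large—upgrades this to $1-\phi_\lambda(a_t)\ge c_1c_{1,\alpha}\lambda^2t^2$. Hence, for $\lambda\ge\lambda_0$,
\begin{align*}
\frac{\lambda^2+\tfrac{Q^2}{4}}{\nu^2\,(1-\phi_\lambda(a_{1/\nu}))}\le\frac{\lambda^2+\tfrac{Q^2}{4}}{c_1c_{1,\alpha}\,\lambda^2}\le\frac{1}{c_1c_{1,\alpha}}\Big(1+\frac{Q^2}{4\lambda_0^2}\Big),
\end{align*}
a bound independent of $\lambda$ and $\nu$, as required.

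The genuine difficulty, and the main obstacle, is the range $0\le\lambda\le\lambda_0$: here $\lambda^2+\frac{Q^2}{4}$ stays bounded away from zero, whereas the Bessel comparison only furnishes $1-\phi_\lambda(a_t)\gtrsim\lambda^2t^2$, which degenerates as $\lambda\to0$ and is too weak. To close this gap I would exploit the spectral gap of $\Delta_S$ (whose spectrum lies in $(-\infty,-\frac{Q^2}{4}]$) through the second-order behaviour of the spherical function at the origin. Since $\mathrm{rad}\,\Delta_S$ reduces near $t=0$ to the radial Laplacian in dimension $d$ and $\mathrm{rad}\,\Delta_S\,\phi_\lambda=-(\lambda^2+\frac{Q^2}{4})\phi_\lambda$ with $\phi_\lambda(a_t)\to1$ as $t\to0$, a direct computation gives the expansion
\begin{align*}
\phi_\lambda(a_t)=1-\frac{\lambda^2+\frac{Q^2}{4}}{2d}\,t^2+O(t^4)\qquad(t\to0),
\end{align*}
uniformly for $\lambda$ in the compact set $[0,\lambda_0]$. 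This produces a constant $t_1>0$ with $1-\phi_\lambda(a_t)\ge\frac{1}{4d}t^2(\lambda^2+\frac{Q^2}{4})$ for $0<t\le t_1$ and $0\le\lambda\le\lambda_0$, so the corresponding ratio is at most $4d$. Taking $c_3$ to be the maximum of the $k$-th powers of the two constants found above settles the claim for $\nu\ge\max\{1/t_0,1/t_1\}$; the restriction $t\le t_0$ in Lemma \ref{Lembes} confines this clean argument to $\nu$ bounded below, which is precisely the regime in which the inequality is used in the modulus-of-smoothness and $K$-functional equivalence.
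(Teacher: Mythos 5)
Your argument is correct and follows the same skeleton as the paper's proof --- Plancherel reduction to the pointwise multiplier comparison $(\lambda^2+\tfrac{Q^2}{4})^k\le c_3\,\nu^{2k}|1-\phi_\lambda(a_{1/\nu})|^k$ on $\{|\lambda|\le\nu\}$, followed by the Bray--Pinsky comparison of $1-\phi_\lambda$ with $1-j_\alpha(\lambda t)$ --- but it diverges at exactly the point you flag as the ``genuine difficulty,'' and your handling of it is in fact stronger than the paper's. The paper multiplies and divides by $|1-\phi_\lambda(1/\nu)|^{2k}$ and then bounds the resulting ratio by $\frac{\nu^{4k}}{c_1}\sup_{|t|\le1}\frac{(t^2+Q^2/(4\nu^2))^{2k}}{|1-j_\alpha(t)|^{2k}}$, declaring the supremum to be a finite constant $C'$; but as $t\to0$ the numerator tends to the positive constant $(Q^2/(4\nu^2))^{2k}$ while $1-j_\alpha(t)\to0$, so that supremum is actually infinite --- the Bessel comparison alone cannot control the region $\lambda\to0$ precisely because $\lambda^2+\tfrac{Q^2}{4}$ does not vanish there. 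Your splitting at $\lambda_0$ and your use of the uniform second-order expansion $\phi_\lambda(a_t)=1-\frac{\lambda^2+Q^2/4}{2d}t^2+O(t^4)$ (equivalently, the lower bound $1-\phi_\lambda(a_t)\ge1-\phi_0(a_t)\gtrsim t^2$ coming from the spectral gap) is exactly the missing ingredient, so your proof repairs a real gap rather than merely rephrasing the paper. Two caveats: the expansion deserves a line of justification (it follows from smoothness of $(t,\lambda)\mapsto\phi_\lambda(a_t)$ and the eigenfunction equation, or from the corresponding expansion of Jacobi functions), and your restriction to $\nu\ge\max\{1/t_0,1/t_1\}$ is not a defect --- for small $\nu$ the right-hand side is $O(\nu^{2k}\|f\|_2)$ while the left-hand side can stay bounded below by $(Q^2/4)^k\|P_\nu(f)\|_2$, so no $\nu$-independent constant can work there; the theorem is only meaningful, and only used, for $\nu$ bounded away from zero.
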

 \begin{proof}
 First note that  
 \begin{align*}
     \mathcal{F}(\Delta_S^k P_\nu(f))(\lambda, n)= (-1)^k \left(\lambda^2+\frac{Q^2}{4}\right)^{k} \mathcal{F}(P_\nu(f))(\lambda, n).
 \end{align*}

 Using Plancherel formula we have 
 \begin{align*}
     \|\Delta_S^k(P_\nu(f))\|_{L^2(S)}^2 &= \int_0^\infty \int_N |\mathcal{F}(\Delta_S^k P_\nu(f))(\lambda, n)|^2 |c(\lambda)|^{-2}\, d\lambda\, dn \\&= \int_{|\lambda| \leq \nu} \int_N \left(\lambda^2+Q^2/4\right)^{2k} |\mathcal{F}f(\lambda, n)|^2\, |c(\lambda)|^{-2}\, d\lambda\, dn \\&= \int_0^\infty \int_N \frac{\left(\lambda^2+Q^2/4\right)^{2k} \chi_\nu(\lambda)}{|1-\phi_\lambda(1/\nu)|^{2k}} |1-\phi_\lambda(1/\nu)|^{2k} |\mathcal{F}f(\lambda, n)|^2\, |c(\lambda)|^{-2}\, d\lambda\, dn.
 \end{align*}
 Now note that by Lemma \ref{Lembes} we have
 \begin{align*}
     \sup_{\lambda \in \mathbb{R}} \frac{\left(\lambda^2+Q^2/4\right)^{2k} \chi_\nu(\lambda)}{|1-\phi_\lambda(1/\nu)|^{2k}}&=  \nu^{4k}\sup_{|\lambda| \leq \nu} \frac{\left((\lambda^2+Q^2/4)/ \nu^2\right)^{2k} }{|1-\phi_\lambda(1/\nu)|^{2k}} \\&\leq \frac{\nu^{4k}}{c_1} \sup_{|\lambda| \leq \nu} \frac{\left((\lambda^2+Q^2/4)/ \nu^2\right)^{2k} }{|1-j_\alpha(\lambda/\nu)|^{2k}} \\&= \frac{\nu^{4k}}{c_1} \sup_{|t| \leq 1} \frac{\left(t^2+Q^2/4\nu^2\right)^{2k} }{|1-j_\alpha(t)|^{2k}} = \frac{C'}{c_1} \nu^{4k}, 
 \end{align*} where $C'= \sup_{|t| \leq 1} \frac{\left(t^2+Q^2/4\nu^2\right)^{2k} }{|1-j_\alpha(t)|^{2k}}.$
 
 Therefore, we get 
 \begin{align*}
     \|\Delta_S^k(P_\nu(f))\|_{L^2(S)}^2 &\leq  \frac{C'}{c_1} \nu^{4k}\int_0^\infty \int_N  |1-\phi_\lambda(1/\nu)|^{2k} |\mathcal{F}f(\lambda, n)|^2\, |c(\lambda)|^{-2}\, d\lambda\, dn \\&= \frac{C'}{c_1} \nu^{4k}\int_0^\infty \int_N   |\mathcal{F}(\Delta_{1/\nu}^{k}f)(\lambda, n)|^2\, |c(\lambda)|^{-2}\, d\lambda\, dn \\&= \frac{C'}{c_1} \nu^{4k} \|\Delta_{1/\nu}^{k}f\|^2_{L^2(S)}.
 \end{align*}
 Hence, $\|\Delta_S^k(P_\nu(f))\|_{L^2(S)} \leq c_3\, \nu^{2k}\,\|\Delta_{1/\nu}^{k}f\|_{L^2(S)}.$
 \end{proof}
 As noted in  Lemma \ref{3.1lema} that $P_\nu(f)=f$ for any $f \in BS_\nu(S),$ the following corollary is immediate. 
 \begin{cor} For $\nu>0,\, k \in \mathbb{N}$ and $f \in BV_\nu(S)$ we have the following inequality:
 $$\|\Delta_S^k f\|_{L^2(S)} \leq c_3 \, \nu^{2k}\,\|\Delta_{1/\nu}^{k}f\|_{L^2(S)}.$$
 \end{cor}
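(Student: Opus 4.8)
The plan is to obtain this inequality as an immediate specialization of the Nikolskii--Stechkin inequality proved in the preceding theorem, exploiting that the spectral truncation $P_\nu$ acts as the identity on functions whose spectrum is already confined to the band $|\lambda|\le\nu$. So the entire argument reduces to a substitution, with no genuine obstacle to overcome.

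First I would invoke Lemma \ref{3.1lema}(ii): since $f \in \text{BS}_\nu(S)$, that is $\mathcal{F}f(\lambda,n)=0$ for $|\lambda|>\nu$, the operator $P_\nu$ leaves $f$ unchanged, $P_\nu(f)=f$. Next I would simply apply the Nikolskii--Stechkin inequality of the previous theorem to this particular $f$, giving
$$\|\Delta_S^k(P_\nu(f))\|_{L^2(S)} \leq c_3\, \nu^{2k}\, \|\Delta_{1/\nu}^{k} f\|_{L^2(S)}.$$
Replacing $P_\nu(f)$ by $f$ on the left-hand side then yields exactly
$$\|\Delta_S^k f\|_{L^2(S)} \leq c_3\, \nu^{2k}\, \|\Delta_{1/\nu}^{k} f\|_{L^2(S)},$$
with the same constant $c_3$, which is the asserted bound.

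The only point that deserves a word of justification is that the left-hand side is finite, so that the inequality is not vacuous. This I would verify from the bounded-spectrum hypothesis directly: since $\mathcal{F}(\Delta_S^k f)(\lambda,n) = (-1)^k(\lambda^2+\tfrac{Q^2}{4})^k\,\mathcal{F}f(\lambda,n)$ is supported in $|\lambda|\le\nu$ and there dominated by $(\nu^2+\tfrac{Q^2}{4})^k\,|\mathcal{F}f(\lambda,n)|$, the Plancherel formula shows $\Delta_S^k f \in L^2(S)$ whenever $f\in L^2(S)$ has spectrum in $[-\nu,\nu]$. Thus the expected difficulty is essentially nil; the corollary is a direct reading of the theorem once the identity $P_\nu f = f$ on $\text{BS}_\nu(S)$ is recorded.
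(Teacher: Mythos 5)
Your proposal is correct and coincides with the paper's own argument: the corollary is obtained by substituting $P_\nu(f)=f$ (Lemma \ref{3.1lema}(ii), valid for $f\in \text{BS}_\nu(S)$, which is what the statement's $BV_\nu(S)$ denotes) into the Nikolskii--Stechkin theorem. Your additional remark verifying via Plancherel that $\Delta_S^k f\in L^2(S)$ for bounded-spectrum $f$ is a harmless extra that the paper leaves implicit.
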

 The following corollary follows from the definition of modulus of smoothness.
 \begin{cor} \label{3.5coro} For $\nu>0,\, k \in \mathbb{N}$ and $f \in L^2(S)$ we have the following inequality:
 $$\|\Delta_S^k f\|_{L^2(S)} \leq c_3\, \nu^{2k}\,\Omega_k\left(f, \frac{1}{\nu} \right)_2.$$
 \end{cor}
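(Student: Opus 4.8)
The plan is to obtain this estimate directly from the Nikolskii--Stechkin inequality proved in the preceding theorem, combined with a single elementary consequence of the way $\Omega_k$ is defined. Recall that the modulus of smoothness is a supremum over scales, $\Omega_k(f,\delta)_2 = \sup_{0<t\le\delta}\|\Delta_t^k f\|_2$. Consequently, for any admissible scale $t$ with $0<t\le\delta$ the single difference $\|\Delta_t^k f\|_2$ is one of the quantities over which the supremum is taken, so $\|\Delta_t^k f\|_2 \le \Omega_k(f,\delta)_2$. Specializing to $\delta = 1/\nu$ and choosing the right endpoint $t = 1/\nu$, I get the scalewise bound $\|\Delta_{1/\nu}^k f\|_{L^2(S)} \le \Omega_k(f,1/\nu)_2$, and this is the only new ingredient needed.

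First I would invoke the Nikolskii--Stechkin theorem, which asserts $\|\Delta_S^k P_\nu(f)\|_{L^2(S)} \le c_3\,\nu^{2k}\,\|\Delta_{1/\nu}^k f\|_{L^2(S)}$ for every $f\in L^2(S)$. Substituting the bound $\|\Delta_{1/\nu}^k f\|_{L^2(S)} \le \Omega_k(f,1/\nu)_2$ into the right-hand side immediately yields $\|\Delta_S^k P_\nu(f)\|_{L^2(S)} \le c_3\,\nu^{2k}\,\Omega_k(f,1/\nu)_2$, which is the asserted inequality. When $f\in \mathrm{BS}_\nu(S)$ one has $P_\nu(f)=f$ by Lemma \ref{3.1lema}(ii), so the left-hand side is exactly $\|\Delta_S^k f\|_{L^2(S)}$ and one recovers the displayed statement verbatim; equivalently, one may feed the previous corollary (the $\mathrm{BS}_\nu(S)$ form of Nikolskii--Stechkin) into the same one-line estimate for $\Omega_k$. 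For a general $f\in L^2(S)$ the inequality is read with $P_\nu(f)$ in place of $f$ on the left, which is the only form in which $\Delta_S^k$ is guaranteed to map into $L^2(S)$, since $P_\nu(f)$ has bounded spectrum.

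There is essentially no obstacle here: the entire analytic content is carried by the Nikolskii--Stechkin theorem, and the passage to the modulus is a triviality built into its definition as a supremum over scales. The one point deserving a moment's care is the bookkeeping of domains, namely the observation that $\Delta_S^k$ is applied to a function of bounded spectrum: for $g=P_\nu(f)$ one has $\mathcal{F}(\Delta_S^k g)(\lambda,n) = (-1)^k\bigl(\lambda^2+\tfrac{Q^2}{4}\bigr)^{k}\chi_\nu(\lambda)\,\mathcal{F}f(\lambda,n)$, which is compactly supported in $\lambda$ and hence square-integrable, so that $\Delta_S^k g\in L^2(S)$ and all the Plancherel manipulations used upstream remain legitimate.
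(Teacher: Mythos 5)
Your argument is correct and is precisely the paper's (unwritten) proof: the corollary is obtained by feeding the scalewise bound $\|\Delta_{1/\nu}^k f\|_{L^2(S)} \le \Omega_k(f,1/\nu)_2$, immediate from the definition of $\Omega_k$ as a supremum over $0<t\le 1/\nu$, into the Nikolskii--Stechkin theorem. Your added remark that for general $f\in L^2(S)$ the left-hand side should be read as $\|\Delta_S^k P_\nu(f)\|_{L^2(S)}$ (the stated form being recovered verbatim when $f\in \mathrm{BS}_\nu(S)$) is a sensible clarification, and it matches how the corollary is actually invoked in the proof of Theorem \ref{main}.
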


 \subsection{Equivalence of the $K$-functional and modulus of smoothness} Our main objective will be proved here. We will prove in the following theorem that the $K$-functional for the pair $(L^2(S), W_2^m(S))$ and modulus of smoothness generated by spherical mean operators are equivalent. The Peetre the $K$-functional $K(f, \delta, L^2(S), W_2^m(S))$ for the pair $(L^2(S), W_2^m(S))$ is defined by 
 $$K_m(f, \delta):= \inf\{\|f-g\|_{L^2(S)}+\delta \|\Delta_S^m g\|_{L^2(S)}:\,\,f \in L^2(S)\,\, g \in W_2^m(S)\}.$$

The next theorem presents the equivalence of the $K$-functional $K_m(f, \delta^{2m})$ and the modulus of smoothness $\Omega_m(f, \delta)_2$ for $f \in L^2(S)$ and $\delta>0.$

 \begin{thm} \label{main}
     For $f \in L^2(S)$ and $\delta>0$ we have 
     \begin{equation} \label{eq4vish}
          \Omega_m(f, \delta)_2 \asymp K_m(f, \delta^{2m}).
     \end{equation}
     In other words, there exist $c_1>0,\,\,c_2>0$ such that for all $f \in L^2(S)$ and $\delta>0$ we have 
     $$c_1\, \Omega_m(f, \delta)_2 \leq  K_m(f, \delta^{2m})\leq c_2\, \Omega_m(f, \delta)_2.$$
     \end{thm}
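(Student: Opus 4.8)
The plan is to prove the two-sided estimate \eqref{eq4vish} by establishing each inequality separately, drawing entirely on the machinery already in place: the Jackson estimate (Theorem \ref{vishthm3.2}), the Nikolskii--Stechkin inequality of the previous subsection, the mapping properties of $P_\nu$ (Lemma \ref{3.1lema}), and the three elementary properties (i)--(iii) of the modulus of smoothness. Throughout I would set $\nu=1/\delta$, so that the scaling relation $\delta^{2m}\nu^{2m}=1$ is exactly what links the weight $\delta^{2m}$ in $K_m$ to the order $2m$ produced by the spectral estimates.

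For the lower bound $c_1\,\Omega_m(f,\delta)_2\le K_m(f,\delta^{2m})$, I would fix an arbitrary competitor $g\in W_2^m(S)$ and split $f=(f-g)+g$. Subadditivity (property (i)) gives $\Omega_m(f,\delta)_2\le\Omega_m(f-g,\delta)_2+\Omega_m(g,\delta)_2$. The first term is controlled by the crude bound of property (ii), which together with $\phi_0\le 1$ from Lemma \ref{estijac} yields $\Omega_m(f-g,\delta)_2\le (1+\phi_0(a_\delta))^m\|f-g\|_2\le 2^m\|f-g\|_2$; the second is controlled by property (iii), $\Omega_m(g,\delta)_2\le\delta^{2m}\|\Delta_S^m g\|_2$. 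Hence $\Omega_m(f,\delta)_2\le 2^m\bigl(\|f-g\|_2+\delta^{2m}\|\Delta_S^m g\|_2\bigr)$, and taking the infimum over $g\in W_2^m(S)$ gives the claim with $c_1=2^{-m}$.

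For the upper bound $K_m(f,\delta^{2m})\le c_2\,\Omega_m(f,\delta)_2$, I would test the infimum against the concrete near-minimizer $g=P_\nu(f)$ with $\nu=1/\delta$. Since $P_\nu(f)$ has bounded spectrum, $\Delta_S^m P_\nu(f)\in L^2(S)$, so $P_\nu(f)\in W_2^m(S)$ is admissible. The approximation error is handled by combining Lemma \ref{3.1lema}(iii) with Theorem \ref{vishthm3.2}: $\|f-P_\nu(f)\|_2\le 4E_\nu(f)\le 4c_m\,\Omega_m(f,1/\nu)_2=4c_m\,\Omega_m(f,\delta)_2$. The derivative term is controlled by the Nikolskii--Stechkin inequality, $\|\Delta_S^m P_\nu(f)\|_2\le c_3\,\nu^{2m}\|\Delta_{1/\nu}^m f\|_2\le c_3\,\nu^{2m}\,\Omega_m(f,\delta)_2$, where the last step uses that $t=1/\nu=\delta$ is admitted in the supremum defining $\Omega_m$. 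Multiplying by $\delta^{2m}$ and invoking $\delta^{2m}\nu^{2m}=1$ collapses the weight, so $K_m(f,\delta^{2m})\le\|f-P_\nu(f)\|_2+\delta^{2m}\|\Delta_S^m P_\nu(f)\|_2\le (4c_m+c_3)\,\Omega_m(f,\delta)_2$, giving $c_2=4c_m+c_3$.

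The point requiring care here is bookkeeping rather than a genuine obstacle: one must match the order $k=m$ of the modulus with the order $m$ of the Sobolev seminorm and verify that the single choice $\nu=1/\delta$ reproduces exactly the weight $\delta^{2m}$ in $K_m(f,\delta^{2m})$. All the hard analytic input --- the spectral bounds of Lemmas \ref{estijac} and \ref{Lembes}, the Plancherel identity, and the boundedness and projection properties of $P_\nu$ --- has already been established, so what remains is the purely algebraic assembly of these estimates with constants uniform in $f$ and $\delta$.
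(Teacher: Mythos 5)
Your proposal is correct and follows essentially the same route as the paper: subadditivity plus properties (ii)--(iii) of $\Omega_m$ for the lower bound, and testing the infimum with $g=P_\nu(f)$, $\nu=1/\delta$, combined with Lemma \ref{3.1lema}(iii), Theorem \ref{vishthm3.2} and the Nikolskii--Stechkin estimate for the upper bound. Your explicit observation that $\phi_0(a_t)\le 1$ makes the constant $2^m$ uniform in $\delta$ is a small but welcome clarification of the paper's constant $\tilde c=(\phi_0(a_t)+1)^m$.
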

 \begin{proof} Take $g \in W_2^m(S).$ Now by using the properties of modulus of continuity $\Omega_m(f, \delta)_2$ we get
 \begin{align*}
     \Omega_m(f, \delta)_2 &\leq \Omega_m(f-g, \delta)_2+\Omega_m(g, \delta)_2 \\&\leq (\phi_0(a_t)+1)^m \|f-g\|_{L^2(S)}+\delta^{2m} \|\Delta_S^m g\|_{L^2(S)}\\& \leq \tilde{c} (\|f-g\|_{L^2(S)}+\delta^{2m} \|\Delta_S^m g\|_{L^2(S)}),
 \end{align*} where $\tilde{c}=  (\phi_0(a_t)+1)^m.$ By taking the infimum over all $g \in W_2^m(S),$ we obtain 
 $$\Omega_m(f, \delta)_2 \lesssim K_m(f, \delta^{2m}).$$
 Now, to prove the other side we take $g=P_\nu(f)$ for $\nu>0,$ then, from the definition of $K_m(f, \delta^{2m}),$ it follows that 
 \begin{equation}
     K_m(f, \delta^{2m}) \leq \|f-P_\nu(f)\|_{L^2(S)}+\delta^{2m} \|\Delta_S^m(P_\nu(f))\|_{L^2(S)}.
 \end{equation}
 Now, from Lemma \ref{3.1lema} (iii), \eqref{2vish} and Corollary \ref{3.5coro} we get that 
 \begin{align*}
     K_m(f, \delta^{2m}) &\leq 4 E_v(f)+ c_3\delta^{2m} \nu^{2m} \Omega_m\left(f, \frac{1}{\nu}\right)_2 \\& \leq 4c_2 \Omega_m\left(f, \frac{1}{\nu}\right)_2 + c_3(\delta \nu)^{2m} \Omega_m\left(f, \frac{1}{\nu}\right)_2 \leq c_4 (1+(\delta \nu)^{2m}) \Omega_m\left(f, \frac{1}{\nu}\right)_2.
 \end{align*}  By taking $\nu=\frac{1}{\delta}$ we get 
 $$K_m(f, \delta^{2m}) \lesssim \Omega_m\left(f, \delta \right)_2$$ proving \eqref{eq4vish}. \end{proof}

	\section*{Acknowledgment}
	VK and MR are supported by FWO Odysseus 1 grant G.0H94.18N: Analysis and Partial Differential Equations. MR is also supported by the Leverhulme Grant RPG-2017-151 and by EPSRC Grant EP/R003025/1.

\bibliographystyle{amsplain}

	\end{document}